\documentclass[11pt]{article}
\title
{The phase transition in site percolation on pseudo-random graphs}
\author{Michael Krivelevich
\thanks{School of Mathematical Sciences, Raymond and Beverly
Sackler Faculty of Exact Sciences, Tel Aviv University, Tel Aviv,
6997801, Israel. Email: krivelev@post.tau.ac.il. Research supported in
part by a USA-Israel BSF grant and by a grant from the Israel
Science Foundation.}
}

%\date{}
\usepackage{amsmath,amsthm, amssymb,latexsym, amsfonts}

\oddsidemargin  0pt     %   Left margin on odd-numbered pages.
\evensidemargin 0pt     %   Left margin on even-numbered pages.
\marginparwidth 30pt    %   Width of marginal notes.
\marginparsep 10pt      % Horizontal space between outer margin and
                        % marginal note

% VERTICAL SPACING:
\topmargin 0pt           % Nominal distance from top of page to top of
                         %    box containing running head.
\headsep 10pt            %    Space between running head and text.

% DIMENSION OF TEXT:

\textheight 8.25in      %Height of text(including footnotes and figures,
                         % excluding running head and foot).
\textwidth 6.5in         % Width of text line.

\topmargin 0pt
\headsep 0pt

%\date{\dateline{Feb 24, 2007} {Mar 23, 2008}\\
%\small Mathematics Subject Classification: 91A43, 91A46, 05C65}

\begin{document}
\bibliographystyle{plain}
\maketitle
\newtheorem{thm}{Theorem}%[section]
\newtheorem{defin}{Definition}
\newtheorem{lemma}{Lemma}[section]
\newtheorem{corol}[lemma]{Corollary}%[section]
\newtheorem{thmtool}{Theorem}[section]
\newtheorem{corollary}[thmtool]{Corollary}
\newtheorem{lem}[thmtool]{Lemma}
\newtheorem{prop}[thmtool]{Proposition}
\newtheorem{clm}[thmtool]{Claim}
\newtheorem{conjecture}{Conjecture}
\newtheorem{problem}{Problem}
\newcommand{\Proof}{\noindent{\bf Proof.}\ \ }
\newcommand{\Remarks}{\noindent{\bf Remarks:}\ \ }
\newcommand{\Remark}{\noindent{\bf Remark:}\ \ }
%Probability
\newcommand{\whp}{{\bf whp}\ }
\newcommand{\prob}{probability}
\newcommand{\rn}{random}
\newcommand{\rv}{random variable}
%Hypergraphs
\newcommand{\hpg}{hypergraph}
\newcommand{\hpgs}{hypergraphs}
\newcommand{\subhpg}{subhypergraph}
\newcommand{\subhpgs}{subhypergraphs}
%Letters
\newcommand{\bH}{{\bf H}}
\newcommand{\cH}{{\cal H}}
\newcommand{\cT}{{\cal T}}
\newcommand{\cF}{{\cal F}}
\newcommand{\cD}{{\cal D}}
\newcommand{\cC}{{\cal C}}

\begin{abstract}
We establish the existence of the phase transition in site percolation on pseudo-random $d$-regular graphs. Let $G=(V,E)$ be an $(n,d,\lambda)$-graph, that is, a $d$-regular graph on $n$ vertices in which all eigenvalues of the adjacency matrix, but the first one, are at most $\lambda$ in their absolute values. Form a random subset $R$ of $V$ by putting every vertex $v\in V$ into $R$ independently with probability $p$. Then for any small enough constant $\epsilon>0$, if $p=\frac{1-\epsilon}{d}$, then with high probability all connected components of the subgraph of $G$ induced by $R$ are of size at most logarithmic in $n$, while for $p=\frac{1+\epsilon}{d}$, if the eigenvalue ratio $\lambda/d$ is small enough as a function of $\epsilon$, then typically $R$ contains a connected component of size at least $\frac{\epsilon n}{d}$ and a path of length proportional to $\frac{\epsilon^2n}{d}$.
\end{abstract}

\section{Introduction and main results}

Let $G=(V,E)$ be a $d$-regular graph on $n$ vertices. Form a random vertex subset $R\subseteq V$ by putting every vertex $v\in V$ into $R$ independently with probability $p$. What can be said about the properties of the random subgraph of $G$ induced by $R$? How large a connected component does it typically contain? How long a path can one find with high probability ({\bf whp}) in $G[R]$?

Of course, the above model is nothing else but the {\em site percolation}, sometimes also called the vertex percolation, on $G$. Although it is perhaps somewhat less popular than its sister model of bond (edge) percolation, it has been quite extensively studied for various graphs and probability regimes.

A well tested intuition suggests that interesting things start happening when the expected vertex degree in the so formed random subgraph crosses the value of 1. This should correspond to the vertex probability $p=\frac{1}{d}$. For this regime, we expect the cardinality of $R$ to be about $n/d$, and it is thus natural so scale the obtained structures relative to this size. There are several results of this type, showing the typical emergence of a connected component $C$ whose size is proportional to $n/d$ for several concrete graphs, like the $d$-dimensional cube $Q^d$ (\cite{BKL91}, \cite{R09}), or the $n$-dimensional Hamming torus \cite{S14} (in fact, statements much more accurate than those to be presented here have been obtained for these models). We however aim to obtain a result applicable to a large class of $d$-regular graphs.

Certainly some further assumptions on the ground graph $G$ have to be made if we aim to get a positive result, that is, to claim the typical existence of a large connected component spanned by $R$. Indeed, we can start with the graph $G$ being a collection of vertex disjoint cliques of size $d+1$ --- in which case of course all connected components in $R$ are of size at most $d+1$, much smaller than $n/d$ for small degree $d=d(n)$. Thus, it is natural to impose some restrictions on the edge distribution of $G$.

Here we assume that $G$ is a pseudo-random graph. Informally speaking, a pseudo-random graph is a graph $G=(V,E)$, whose edge distribution resembles closely that of a truly random graph $G(n,p)$ of the same edge density $p=\frac{2|E|}{|V|}$. There are several possible models of pseudo-random graphs commonly used. In this paper we adapt the notion of $(n,d,\lambda)$-graphs. A graph $G$ is an {\em $(n,d,\lambda)$-graph} if $G$ has $n$ vertices, is $d$-regular, and all eigenvalues of the adjacency matrix of $G$, but the first one, are at most $\lambda$ in their absolute values. (We assume that the eigenvalues of (the adjacency matrix of) $G$ are ordered in the non-increasing order $\lambda_1\ge\ldots\ge \lambda_n$. The largest eigenvalue of any $d$-regular graph is easily seen to be $d$, sometimes referred to as the trivial eigenvalue of $G$.) The reader can consult the survey \cite{KS06} for an extensive discussion of this notion.

Results about bond (edge) percolation on $(n,d,\lambda)$-graphs have appeared in \cite{FKM04}, \cite{KS13}. To the best of our knowledge, the present paper is the first one to address the general setting of site percolation on $(n,d,\lambda)$-graphs, or on some other general class of regular pseudo-random graphs.

It is well known that the pseudo-randomness of the edge distribution in $(n,d,\lambda)$-graphs can be controlled through the so called {\em eigenvalue ratio} $\lambda/d$ --- the smaller the ratio is the closer the edge distribution of $G$ approaches that of a random graph with edge probability $p=\frac{d}{n}$. We will state a standard result establishing this connection later (Lemma \ref{eml}).

Equipped with this formalism, we can now state our main results.

\begin{thm}\label{thm1}
Let $\epsilon>0$. Let $G=(V,E)$ be a graph of maximum degree at most $d$ on $n$ vertices. Form a random subset $R\subseteq V$ by including each vertex $v\in V$ in $R$ independently and with probability $p$. If $p=\frac{1-\epsilon}{d}$, then \whp all connected components of the induced subgraph $G[R]$ are of size less than $\frac{4}{\epsilon^2}\ln n$.
\end{thm}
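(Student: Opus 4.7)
The plan is to reduce the component size to a subcritical branching-process tail and then take a union bound over vertices. Pseudo-randomness of $G$ plays no role in this half of the phase transition --- only the maximum-degree bound is used. Fix $v\in V$ and expose the component $C(v)$ of $v$ in $G[R]$ by a BFS, processing one queued vertex at a time and, for each popped $u$, revealing whether its not-yet-examined neighbors in $G$ lie in $R$. Since each popped vertex has at most $d$ such neighbors and each lies in $R$ independently with probability $p$, the number of freshly discovered children of $u$ is stochastically dominated by $\mathrm{Bin}(d,p)$, and the whole BFS is stochastically dominated by a Galton--Watson branching process with that offspring law. Because $dp=1-\epsilon<1$, this branching process is subcritical.

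Set $k_0=\frac{4}{\epsilon^2}\ln n$. If $|C(v)|\ge k_0$, then after $k_0-1$ BFS pops the queue is still non-empty, i.e.\ the at most $(k_0-1)d$ independent $\mathrm{Bernoulli}(p)$ trials revealed so far have produced at least $k_0-1$ successes. Hence
\[
\Pr[\,|C(v)|\ge k_0\,]\le \Pr\!\left[\mathrm{Bin}\bigl((k_0-1)d,\,p\bigr)\ge k_0-1\right].
\]
The expected number of successes is $(k_0-1)(1-\epsilon)$, so reaching $k_0-1$ amounts to a deviation of $t=(k_0-1)\epsilon$ above the mean. A Bernstein/Bennett estimate then gives
\[
\Pr[\,|C(v)|\ge k_0\,]\le \exp\!\left(-\frac{t^2}{2(k_0-1)(1-\epsilon)+\tfrac{2}{3}t}\right)\le \exp\!\left(-(1-O(\epsilon))\tfrac{(k_0-1)\epsilon^2}{2}\right)=n^{-2+O(\epsilon)},
\]
the constant $4$ in the definition of $k_0$ being chosen precisely to produce the exponent $2$ above.

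A union bound over the $n$ possible choices of $v$ finally gives $\Pr[\exists v:\ |C(v)|\ge k_0]\le n\cdot n^{-2+O(\epsilon)}=n^{-1+O(\epsilon)}=o(1)$ once $\epsilon$ is taken small. The only non-routine obstacle is quantitative: one must invoke a reasonably sharp exponential form of Chernoff (rather than a loose $\exp(-\delta^2\mu/3)$ variant) in order to land on the stated constant $4/\epsilon^2$, and to verify that the lower-order correction in the exponent does not swallow the leading $\epsilon^2/2$ when $\epsilon$ is small. A secondary, purely notational point is to set up the BFS so that the Bernoulli trials it reveals are genuinely independent; this is achieved in the standard way by exposing the status of each vertex only when (and if) BFS first encounters it.
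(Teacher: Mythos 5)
Your argument is correct, and the constant works out: with $k_0=\frac{4}{\epsilon^2}\ln n$ the Bernstein exponent is
\[
\frac{(k_0-1)^2\epsilon^2}{2(k_0-1)(1-\epsilon)+\frac{2}{3}(k_0-1)\epsilon}=\frac{(k_0-1)\epsilon^2}{2-\frac{4}{3}\epsilon}\ge \frac{(k_0-1)\epsilon^2}{2}=2\ln n-O(1)\,,
\]
so each root contributes $O(n^{-2})$, and the union bound over the $n$ roots closes the proof; in fact this holds for every fixed $0<\epsilon<1$, so your final ``$n^{-2+O(\epsilon)}$, then take $\epsilon$ small'' is more cautious than necessary (the denominator is smaller than $2$, not larger). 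The route is genuinely different from the paper's, although the quantitative core is the same in both: roughly $kd$ Bernoulli($p$) trials must produce about $k$ successes, an upward deviation by a factor of $\frac{1}{1-\epsilon}$. You explore the component of each fixed vertex by BFS, dominate the exploration by a subcritical branching/binomial process, and union-bound over the $n$ possible roots. The paper instead runs a single DFS on all of $G$, driven by one pre-generated i.i.d. sequence $\bar X=(X_i)_{i=1}^n$: a component of size $k$ forces some interval of length at most $kd$ in $\bar X$ to contain at least $k$ ones (the bound $kd$ on the number of queries during the epoch uses that the first $k$ discovered vertices span a connected subgraph, hence at least $k-1$ edges, so their closed neighborhood has at most $k+kd-2(k-1)\le kd$ vertices), and the union bound is over the at most $n$ intervals of $[n]$. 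Your version is the more classical exploration-process argument and is entirely self-contained for this theorem; the paper's DFS formulation buys uniformity with the supercritical Theorems \ref{th2} and \ref{th3}, where the sets $S$, $U$, $W$ and the path spanned by $U$ in the same single-pass DFS are the heart of the matter.
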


\begin{thm}\label{th2}
For every small enough $\epsilon>0$ there exists $\delta>0$ such that the following is true. Let $G=(V,E)$ be an $(n,d,\lambda)$-graph. Assume that $d=o(n)$ and $\frac{\lambda}{d}<\delta$. Let $p=\frac{1+\epsilon}{d}$. Form a random subset $R\subseteq V$ by including each vertex $v\in V$ in $R$ independently and with probability $p$. Then \whp $R$ contains a path of length at least $\frac{\epsilon^2n}{5d}$ in $G$.
\end{thm}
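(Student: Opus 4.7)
\textbf{Proof proposal for Theorem~\ref{th2}.} The plan is to adapt the depth-first search (DFS) argument of Krivelevich and Sudakov for long paths in $G(n,p)$ to the site-percolation setting on pseudo-random graphs. I run DFS on $G$ maintaining three disjoint sets $S$ (already-popped vertices), $T$ (the current DFS stack, which at each moment induces a path in $G[R]$), and $U$ (vertices the search has not yet touched). Whenever the DFS considers a new vertex $w$---either as a prospective root of a new DFS tree, or as the first untouched $G$-neighbor of the current top of $T$---I flip an independent $\mathrm{Bernoulli}(p)$ coin $X_w$, push $w$ onto $T$ if $X_w=1$, and move $w$ directly to $S$ otherwise; the top of $T$ is popped once it has no untouched $G$-neighbors remaining in $U$. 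It suffices to show that \whp $|T_\tau|\ge L:=\epsilon^2 n/(5d)$ at some moment $\tau$, since $T_\tau$ is at every moment a path of $G[R]$.

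Suppose for contradiction that $|T_\tau|<L$ for every $\tau$, and set $A_\tau:=S_\tau\cap R$ (the fully processed live vertices). The DFS pop rule guarantees the structural identity
\[ N_G(A_\tau)\cap U_\tau=\emptyset. \]
Moreover, after $k$ coin flips the number of live queried vertices $|A_\tau|+|T_\tau|$ is distributed as $\mathrm{Bin}(k,p)$, so at the stopping time $\tau_k$ defined by $|S_{\tau_k}|+|T_{\tau_k}|=k$ a Chernoff bound (valid because $d=o(n)$ makes $kp$ tend to infinity for $k$ of linear order) gives \whp
\[ |A_{\tau_k}|\;\ge\;(1-o(1))\,\frac{k(1+\epsilon)}{d}\,-\,L. \]

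The contradiction then arises from applying the pseudo-randomness of $G$ to the pair $(A_{\tau_k},U_{\tau_k})$. The expander mixing lemma (Lemma~\ref{eml}) together with $e_G(A_{\tau_k},U_{\tau_k})=0$ forces $|A_{\tau_k}|\cdot|U_{\tau_k}|\le (\lambda/d)^2 n^2 < \delta^2 n^2$. Choosing $k=\alpha n$ with $\alpha$ close to $1/2$ (so as to maximize $\alpha(1-\alpha)(1+\epsilon)$), substituting the Chernoff lower bound on $|A_{\tau_k}|$ and the value $L=\epsilon^2 n/(5d)$, one obtains an inequality on $d\delta^2$ that is violated once $\delta=\delta(\epsilon)>0$ is chosen sufficiently small as a function of $\epsilon$. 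This is the desired contradiction, so $|T_\tau|\ge L$ at some $\tau$ and the corresponding stack is the required path in $G[R]$.

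The main obstacle is this final spectral step: obtaining the contradiction with $\delta$ depending only on $\epsilon$. A naive use of the expander mixing lemma as just described seems to give a bound of the form $\delta=O(1/\sqrt d)$, so one likely has to invoke the sharper small-set vertex expansion available in $(n,d,\lambda)$-graphs (Tanner's inequality), which for the sets $A_{\tau_k}$ relevant here---whose size is $O(n/d)\le n(\lambda/d)^2$ by Alon--Boppana---yields $|A_{\tau_k}|\le 2(\lambda/d)^2(|S_{\tau_k}|+|T_{\tau_k}|)$. Combining this with the Chernoff lower bound forces $|S_{\tau_k}|+|T_{\tau_k}|$ to stay of order $\epsilon^2 n$ so long as $|T_\tau|<L$, and since $|S_\tau|+|T_\tau|$ ultimately grows to $n$, the stack $|T_\tau|$ must exceed $L$ at some moment, completing the argument.
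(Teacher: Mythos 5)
Your DFS bookkeeping is sound and matches the paper's in substance (your stack $T$ is the paper's $U$, your identity $N_G(A_\tau)\cap U_\tau=\emptyset$ is the paper's $N_G(S)\subseteq U\cup W$), but the final spectral step --- the only place pseudo-randomness is used --- has a genuine gap, which you flag yourself and then do not repair. With $\lambda/d<\delta$ for a constant $\delta=\delta(\epsilon)$ independent of $d$, \emph{no} deterministic spectral bound can force a particular set $A$ of size $\Theta(n/d)$ to expand by a factor close to $d$, which is what your contradiction needs. Your first route (the mixing lemma applied to the pair $(A,U)$) requires $\delta=O(1/\sqrt{d})$, as you observe. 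But the proposed fix via Tanner's inequality fails for exactly the same reason: Tanner only gives $|N_G(A)|\ge |A|/\bigl(2(\lambda/d)^2\bigr)\ge |A|/(2\delta^2)$, i.e.\ constant-factor expansion, so the inequality you extract, $|A_{\tau_k}|\le 2\delta^2\,(|S_{\tau_k}|+|T_{\tau_k}|)$, is vacuous as soon as $2\delta^2>(1+\epsilon)/d$ --- which is precisely the regime at hand, since Chernoff already tells you $|A_{\tau_k}|\approx(1+\epsilon)(|S_{\tau_k}|+|T_{\tau_k}|)/d$. No contradiction results, and the argument does not close with $\delta$ depending only on $\epsilon$.

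The missing idea is that one should not ask for factor-$d$ expansion of \emph{all} sets of size $\Theta(n/d)$, only of those that the random set $R$ might actually contain. This is the content of the paper's Lemma \ref{key_lemma} (modelled on an argument of Alon and R\"odl): one \emph{counts} the $m$-sets $S$ with $|N_G(S)|<(1-\alpha_0)\bigl(dm-\tfrac{d^2m^2}{2n}\bigr)$ by building them vertex by vertex. Corollary \ref{cor1} (a consequence of Lemma \ref{eml}) shows that at each step only $O(\delta^2 n)$ candidate vertices fail to enlarge the external neighborhood of the current prefix by roughly $(1-\alpha)d$; hence any non-expanding set must contain at least $\alpha m$ such ``bad'' choices, and the number of non-expanding $m$-sets is at most $\bigl[(e/\alpha)^{\alpha}(2\delta^2/\alpha^2)^{\alpha}n\bigr]^m/m!$. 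Multiplying by $p^m$ and using $np/m=O(1/c)$ makes the probability that $R$ contains such a set at most $2^{-m}$ once $\delta$ is small in terms of $\alpha_0$ and $c$ alone --- this union bound over the randomness of $R$ is what decouples $\delta$ from $d$. With that lemma in hand, the rest of your outline goes through essentially as in the paper, which examines the moment after $\epsilon n$ queries and contradicts $N_G(S)\subseteq U\cup W$ with $|U\cup W|\le\epsilon n$.
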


\begin{thm}\label{th3}
For every small enough $\epsilon>0$ there exists $\delta>0$ such that the following is true. Let $G=(V,E)$ be an $(n,d,\lambda)$-graph. Assume that $d=o(n)$ and $\frac{\lambda}{d}<\delta$. Let $p=\frac{1+\epsilon}{d}$. Form a random subset $R\subseteq V$ by including each vertex $v\in V$ in $R$ independently and with probability $p$. Then \whp the induced subgraph $G[R]$ has a connected component of size at least $\frac{\epsilon n}{d}$.
\end{thm}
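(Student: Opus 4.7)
My plan is to establish Theorem~\ref{th3} by a two-round exposure (sprinkling) argument that bootstraps Theorem~\ref{th2}. Write the inclusion probability as $p = 1 - (1-p_1)(1-p_2)$ with $p_1 = (1+\epsilon/2)/d$ and $p_2 = (p-p_1)/(1-p_1) \sim \epsilon/(2d)$, and couple $R = R_1 \cup R_2$ where $R_1, R_2$ are independent site-percolations at rates $p_1, p_2$ respectively.

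In the first round, apply Theorem~\ref{th2} with $\epsilon/2$ in place of $\epsilon$ (taking $\delta$ smaller so the hypothesis is met) to $R_1$: \whp $G[R_1]$ contains a path of length $L := \epsilon^2 n/(20d)$, and hence a connected component $C_0$ of $G[R_1]$ with $|C_0| \geq L$. We may assume $|C_0| < \epsilon n/d$, else we are done. A refinement of the depth-first search argument underlying Theorem~\ref{th2} should further yield a collection $\mathcal{L}$ of at most $O(1/\epsilon)$ components of $G[R_1]$, each of size at least $L$ and total size at least a positive constant multiple of $\epsilon n/d$; the idea is that the DFS ``excess push-count'' beyond the single long stack accumulates over the whole exploration and must reside in components of size at least $L$.

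In the second round, expose $R_2$ and argue that the components of $\mathcal{L}$ are merged into a single component of $G[R]$. By maximality of each $C \in \mathcal{L}$ in $G[R_1]$, the external neighborhood $N_G(C) \setminus C$ lies entirely in $V \setminus R_1$, and each such vertex is independently included in $R$ through $R_2$ with probability $p_2 = \Theta(\epsilon/d)$. Invoking Lemma~\ref{eml} (or an extension of it to count short $G$-walks between prescribed subsets) and union-bounding over the $O(1/\epsilon^2)$ pairs of components in $\mathcal{L}$, one shows that the sprinkled vertices supply enough bridges between pairs of large components so that \whp they all merge in $G[R]$. The resulting giant component has size at least a positive constant multiple of $\epsilon n/d$; tuning the split $p = 1 - (1-p_1)(1-p_2)$ and the DFS parameters converts this constant to $1$.

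The main obstacle is quantifying the round-two merging step. A naive use of Lemma~\ref{eml} to count common neighbors of two components of size $L = \epsilon^2 n/(20d)$ is too weak when $\lambda/d$ is only a small constant independent of $d$: the noise term $\lambda \sqrt{|C||C'|}$ swamps the main term $d|C||C'|/n$ at this scale. To circumvent this one must either count $G$-walks of length $k \geq 3$ between pairs of large components with interior in $R_2 \setminus R_1$ (so that the effective noise is $(\lambda/d)^k$ times the main term), or iterate the sprinkling into several sub-rounds, at each step absorbing more components into a growing anchor via the vertex-expansion of intermediate-sized sets in the $(n, d, \lambda)$-graph. Orchestrating the sprinkling budget $p_2 = \Theta(\epsilon/d)$ against the pseudo-random expansion parameters so that the merged component indeed reaches size $\epsilon n/d$ is the technical heart of the proof.
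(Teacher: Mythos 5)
Your proposal has two genuine gaps, and you have correctly diagnosed one of them yourself. First, the round-one claim that the DFS exploration yields $O(1/\epsilon)$ components of $G[R_1]$, each of size at least $L=\epsilon^2n/(20d)$ and of total size $\Theta(\epsilon n/d)$, does not follow from Theorem \ref{th2} or from any ``excess push-count'' bookkeeping: the proof of Theorem \ref{th2} only controls the size of the stack $U$ at a single moment, and a priori almost all of $R_1$ could sit in components smaller than $L$. To rule that out you need precisely the structural input of Lemma \ref{key_lemma} --- that a set of size $\Theta(n/d)$ contained in the random set almost surely expands by nearly a factor of $d$ --- and once you invoke that, the sprinkling becomes superfluous. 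Second, the merging step is not merely ``technical'': as you observe, for components of size $s=\Theta(\epsilon^2n/d)$ the error term $\lambda\sqrt{|C||C'|}$ in Lemma \ref{eml} exceeds the main term $\frac{d}{n}|C||C'|$ unless $\lambda<\epsilon^2/20$, which is impossible for large $d$ since $\lambda=\Omega(\sqrt{d})$; so direct edges between the pieces cannot be guaranteed, and the walk-counting or iterated-sprinkling repairs you gesture at are not carried out (they would also require controlling dependencies among walks and ensuring the interior vertices land in $R_2$). As it stands, both load-bearing steps of the plan are unproven.

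The paper's proof is more direct and uses no sprinkling. It runs a single DFS and shows, via Lemma \ref{key_lemma} with $\alpha_0=\epsilon/5$ and $c=\epsilon^3$, that the stack $U$ cannot be empty at any query time $t\in[\epsilon^3n,\epsilon n]$: if it were, the set $S$ of accepted vertices, of size $m\ge(1+\frac{3\epsilon}{4})t/d$ by Lemma \ref{lem1}(4), would have its entire external neighborhood inside $W$ with $|W|\le t$, making $S$ non-expanding, which Lemma \ref{key_lemma} excludes with high probability. Hence all of the at least $\epsilon n/d$ vertices accepted during that window belong to one epoch, i.e., one component. I would recommend abandoning the two-round route: the count of non-expanding sets is the single new idea needed, and it delivers the giant component in one pass.
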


Some comments are in order here. First, observe that Theorem \ref{thm1} holds unconditionally, i.e., without any further assumptions on the edge distribution of $G$ --- it applies to {\em every} graph $G$ of maximum degree $d$. This means that if the vertex probability $p=p(d)$ is a notch below the critical value $1/d$, then even for the best ground graphs $G$ the random induced subgraph $G[R]$ typically shatters into relatively small pieces. On the positive side, for $p(d)$ above the critical probability and assuming that $G$ is pseudo-random, $R$ contains typically a component of size linear in $|R|$, and even a path of linear size. This phenomenon can be viewed as the {\em phase transition} in this site percolation model. It is pretty much in line with the familiar situation in the random graph $G(n,p)$.
There, for $p=\frac{1-\epsilon}{n}$ all components are of size $O_{\epsilon}(\log n)$, while for $p=\frac{1+\epsilon}{n}$, there is \whp a connected component of size linear in $n$. This is, in one sentence, the essence of the fundamental discovery of Erd\H{o}s and R\'enyi \cite{ER60}. As for paths, Ajtai, Koml\'os and Szemer\'edi proved some 20 years later \cite{AKS81} that $G(n,p)$ with $p=\frac{1+\epsilon}{n}$ contains \whp a path of length linear in $n$ as well (see \cite{KS13} for a recent simple proof of this fact). Actually, even the order of the dependence on $\epsilon$ in our theorems matches the corresponding results for $G(n,p)$.

Let us say a few general words about the proofs. We use the Depth First Search algorithm (DFS) for all three theorems above. We run the DFS algorithm on our random instance, allowing it to uncover the random set $R$ along the algorithm execution. In this respect, our arguments are somewhat similar to those of \cite{KS13}, with the most substantial difference being that in our setting the algorithm exposes random decisions on the {\em vertices} of $G$, rather than its its edges as in \cite{KS13}. Another key ingredient in the proof is an estimate of the number of non-expanding sets of a given size in an $(n,d,\lambda)$-graph; here we are pretty much inspired by a similar argument from the paper of Alon and R\"odl \cite{AR05}.

The notation we use here is fairly standard. In particular, for a graph $G=(V,E)$ and disjoint vertex subsets $U,W\subset V$, we denote by $N_G(U)$ the external neighborhood of $U$, i.e., $N_G(U)=\{v\in V\setminus U:\mbox{$v$ has a neighbor in $U$}\}$; let also $e_G(U)$, $e_G(U,W)$ denote the number of edges of $G$ spanned by $U$, between $U$ and $W$, resp. For $v\in V$ and $U\subseteq V$, let $d(v,U)$ be the number of neighbors of $v$ in $U$. If the graph $G$ is clear from the context, we often allow ourselves not to put it in the indices of the above notation. We omit rounding signs occasionally for the the sake of clarity of presentation.

\section{Tools}
\subsection{Eigenvalues and edge distribution}
We will apply the following standard estimate (see, e.g., Chapter 9 of \cite{AS}), sometimes called the expander mixing lemma; it postulates that the edge distribution in an $(n,d,\lambda)$-graph $G$ with small eigenvalue ratio $\lambda/d$ is quite close to that of a truly random graph of edge density $d/n$. In fact this will be the only tool about graph eigenvalues used in our proof.
\begin{lemma}\label{eml}
Let $G=(V,E)$ be an $(n,d,\lambda)$-graph. Then for any pair of subsets $B,C\subseteq V$,
\begin{equation}\label{eml1}
\left|e(B,C)-\frac{d}{n}|B||C|\right|\le \lambda \sqrt{|B||C|}\,,
\end{equation}
where $e(B,C)$ denotes the number of ordered pairs $(u,v)$ with $u\in B,v\in C$ such that $(u,v)\in E$.
\end{lemma}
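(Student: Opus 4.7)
The plan is to prove Lemma \ref{eml} via the spectral decomposition of the adjacency matrix $A$ of $G$, which is the standard route for statements of expander-mixing-lemma type. Since $G$ is $d$-regular, $A$ is real symmetric with eigenvalues $d=\lambda_1\ge \lambda_2\ge \ldots\ge \lambda_n$, and we can fix an orthonormal eigenbasis $v_1,\ldots,v_n$ of $\mathbb{R}^V$ with $v_1=\mathbf{1}/\sqrt{n}$ (the normalized all-ones vector, which is indeed an eigenvector of $A$ with eigenvalue $d$ because $G$ is $d$-regular). By hypothesis, $|\lambda_i|\le \lambda$ for every $i\ge 2$.

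Next I would write the quantity of interest as a quadratic form. For $B,C\subseteq V$ with characteristic vectors $\mathbf{1}_B, \mathbf{1}_C$, a direct counting shows
\[
e(B,C)=\mathbf{1}_B^{T}A\,\mathbf{1}_C,
\]
since the $(u,v)$ entry of $A$ is $1$ exactly when $(u,v)\in E$. Expanding in the eigenbasis, write $\mathbf{1}_B=\sum_{i=1}^{n}\alpha_i v_i$ and $\mathbf{1}_C=\sum_{i=1}^{n}\beta_i v_i$. Then
\[
\mathbf{1}_B^{T}A\,\mathbf{1}_C \;=\; \sum_{i=1}^{n}\lambda_i \alpha_i \beta_i.
\]
The first coefficients are $\alpha_1=\langle \mathbf{1}_B,\mathbf{1}/\sqrt{n}\rangle=|B|/\sqrt{n}$ and $\beta_1=|C|/\sqrt{n}$, so the $i=1$ term contributes exactly $d\cdot|B||C|/n$, which is the main term appearing in \eqref{eml1}.

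It remains to control the contribution of the non-trivial eigenvalues, and this is really the only place where one has to be slightly careful. Using $|\lambda_i|\le \lambda$ for $i\ge 2$ and then Cauchy--Schwarz,
\[
\Bigl|\sum_{i\ge 2}\lambda_i\alpha_i\beta_i\Bigr|
\;\le\; \lambda\sum_{i\ge 2}|\alpha_i\beta_i|
\;\le\; \lambda\Bigl(\sum_{i\ge 2}\alpha_i^{2}\Bigr)^{\!1/2}\!\Bigl(\sum_{i\ge 2}\beta_i^{2}\Bigr)^{\!1/2}.
\]
By Parseval, $\sum_i \alpha_i^2=\|\mathbf{1}_B\|^2=|B|$ and similarly for $C$, so the right-hand side is at most $\lambda\sqrt{|B|\,|C|}$. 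Combining with the main term yields $\bigl|e(B,C)-\tfrac{d}{n}|B||C|\bigr|\le \lambda\sqrt{|B||C|}$, as claimed. There is no genuine obstacle here; the only subtlety worth flagging is that one must know $G$ is $d$-regular to identify $v_1$ with a multiple of $\mathbf{1}$, which is precisely what makes the $i=1$ term produce the expected density $d/n$ rather than something harder to evaluate.
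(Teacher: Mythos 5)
Your proof is correct and is exactly the standard spectral-decomposition argument that the paper invokes by citation (Chapter 9 of Alon--Spencer) rather than reproving: expand $\mathbf{1}_B,\mathbf{1}_C$ in an orthonormal eigenbasis, isolate the $d|B||C|/n$ contribution of the all-ones eigenvector, and bound the rest by Cauchy--Schwarz and Parseval. No gaps.
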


\begin{corol}\label{cor1}
Let $G=(V,E)$ be an $(n,d,\lambda)$-graph and let $\alpha>0$. Let $B\subseteq V$ be a vertex subset of cardinality $|B|\ge \frac{n}{2}$. Define
$$
C=\left\{v\in V: d(v,B)\le (1-\alpha)\frac{|B|d}{n}\right\}\,.
$$
Then $|C|\le \frac{2}{\alpha^2}\left(\frac{\lambda}{d}\right)^2n$.
\end{corol}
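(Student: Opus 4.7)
The plan is to apply Lemma \ref{eml} directly to the pair $(B,C)$ and combine it with an upper bound on $e(B,C)$ coming from the definition of $C$.

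First I would expand $e(B,C)$ using the ordered-pair convention from the lemma:
$$e(B,C)=\sum_{v\in C} d(v,B).$$
Since every vertex $v\in C$ satisfies $d(v,B)\le (1-\alpha)\frac{|B|d}{n}$ by the very definition of $C$, summing over $v\in C$ gives the upper bound
$$e(B,C)\le (1-\alpha)\frac{|B||C|d}{n}.$$
On the other hand, Lemma \ref{eml} provides the lower bound
$$e(B,C)\ge \frac{d}{n}|B||C|-\lambda\sqrt{|B||C|}.$$

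Chaining the two inequalities, the main terms $\frac{d}{n}|B||C|$ cancel up to a factor of $\alpha$, yielding
$$\alpha\,\frac{d}{n}|B||C|\le \lambda\sqrt{|B||C|},$$
which rearranges to $\sqrt{|B||C|}\le \frac{\lambda n}{\alpha d}$, i.e.\ $|B||C|\le \frac{\lambda^2 n^2}{\alpha^2 d^2}$. Dividing by $|B|\ge n/2$ would then deliver the desired estimate
$$|C|\le \frac{2}{\alpha^2}\left(\frac{\lambda}{d}\right)^2 n.$$

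There is no real obstacle here; the only mild subtlety is keeping the ordered-pair convention of Lemma \ref{eml} consistent with the quantity $d(v,B)$ (no factor of two appears because each $v\in C$ contributes exactly $d(v,B)$ ordered pairs $(u,v)\in B\times C\cap E$), and invoking the hypothesis $|B|\ge n/2$ only in the very last step to turn the bound on $|B||C|$ into a bound on $|C|$.
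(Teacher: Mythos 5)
Your proposal is correct and follows essentially the same argument as the paper: bound $e(B,C)$ from above via the definition of $C$, from below via the expander mixing lemma, cancel the main terms, and use $|B|\ge n/2$ at the end. The extra remark on the ordered-pair convention is a fine (and accurate) clarification but does not change the argument.
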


\begin{proof}
Observe that by the definition of $C$ we have $e(B,C)\le (1-\alpha)\frac{|B||C|d}{n}$. On the other hand, by (\ref{eml1}) $e(B,C)\ge \frac{d}{n}|B||C|-\lambda\sqrt{|B||C|}$. Comparing we derive:
$$
\frac{d}{n}|B||C|-\lambda\sqrt{|B||C|}\le (1-\alpha)\frac{|B||C|d}{n}\,,
$$
and from here
$$
|C|\le \frac{n^2}{|B|}\cdot\frac{1}{\alpha^2}\left(\frac{\lambda}{d}\right)^2\le \frac{2}{\alpha^2}\left(\frac{\lambda}{d}\right)^2n
$$
(the last inequality is due to the assumption $|B|\ge \frac{n}{2}$), and the claim follows.
\end{proof}

\subsection{Depth first search on random vertex subgraphs}
The {\em Depth First Search}, or DFS for brevity, is a standard graph search algorithm, usually used to uncover the connected components of an input graph $G=(V,E)$. In this paper we use it in a somewhat unusual context, revealing also a random subset $R\subset V$ of the vertex set $V$ as the algorithm proceeds. Here is a brief description of the algorithm. It maintains and updates a partition of $V$ into four sets of vertices, letting
$S$ be the set of vertices whose exploration is complete, $T$ be the
set of unvisited vertices, $U$ be the set of presently processed vertices, where the
vertices of $U$ are kept in a stack (the last in, first out data
structure), and finally $W$ be the set of vertices discovered to fall outside of the random set $R$. It is also assumed that some order $\sigma$ on the
vertices of $G$ is fixed, and the algorithm prioritizes vertices
according to $\sigma$. The algorithm starts with $S=U=W=\emptyset$ and
$T=V$, and runs till $U\cup T=\emptyset$. At each round of the
algorithm, if the set $U$ is non-empty, the algorithm queries $T$
for neighbors of the last vertex $v$ that has been added to $U$,
scanning these neighbors according to $\sigma$. If $v$ has a neighbor $u$  in
$T$, the algorithm flips a coin that comes heads with probability $p$. If the result of this coin flipping is positive, the algorithm deletes $u$ from $T$ and inserts it into $U$; otherwise $u$ moves to $W$.
If $v$ does not have a neighbor in $T$, then $v$ is popped out of $U$
and is moved to $S$. If $U$ is empty, the algorithm chooses the
first vertex $u$ of $T$ according to $\sigma$, deletes it from $T$, flips a coin for $u$ and either pushes it
into $U$ or moves to $W$ based on the result of this coin flipping. Once the algorithm execution is complete, the set $S$ coincides with the random set $R$, while $W$ is its complement $W=V\setminus R$.

Observe that the DFS algorithm starts revealing a connected
component $C$ of the induced subgraph $G[R]$ at the moment the first vertex of $C$ gets into
(empty beforehand) $U$ and completes discovering all of $C$ when $U$
becomes empty again. We call a period of time between two
consecutive emptyings of $U$ an {\em epoch}, each epoch corresponding
to one connected component of $G[R]$.

The following basic properties of the DFS algorithm will be useful to
us:
\begin{itemize}
\item at any stage of the algorithm, it has been revealed already
that the graph $G$ has no edges between the current set $S$ and the
current set $T$, and thus $N_G(S)\subseteq U\cup W$;
\item the set $U$ always spans a path (indeed, when a vertex $u$ is
added to $U$, it happens because $u$ is a neighbor of the last
vertex $v$ in $U$; thus, $u$ augments the path spanned by $U$, of
which $v$ is the last vertex).
\end{itemize}

We will run the DFS on an $n$-vertex input $G$, fixing some
order $\sigma$ on $V(G)$. When
the DFS algorithm is fed with a sequence of i.i.d. Bernoulli($p$)
random variables $\bar{X}=(X_i)_{i=1}^n$, so that is gets its $i$-th
query answered positively if $X_i=1$ and answered negatively
otherwise, the final subset $S$ of the algorithm is distributed {\em exactly} like a random subset $R$, formed by including each vertex of $V$ independently and with probability $p$.
 Thus, studying the structure of $G[R]$ can be
reduced to studying the properties of the random sequence $\bar{X}$ --- a much more accessible task.

\subsection{Concentration of random variables}
As we indicated, our argument allows to study the properties of a random vector $\bar{X}=(X_i)_{i=1}^n$, instead of studying directly the subgraph of $G$, spanned by a random subset $R$. Since for a subset $I\subseteq [n]$, the sum $\sum_{i\in I}X_i$ is distributed binomially with parameters $|I|$ and $p$, we can use standard large deviation estimates for binomial random variables. In particular, we have:
\begin{lemma}\label{lem1}
Let $\epsilon>0$ be a small enough constant. Consider the sequence
$\bar{X}=(X_i)_{i=1}^n$ of i.i.d. Bernoulli random variables with
parameter $p$. Assume $d=o(n)$.
\begin{enumerate}
\item Let $p=\frac{1-\epsilon}{d}$.  Let $k=\frac{4}{\epsilon^2}\ln
n$. Then \whp there is no interval of length $kd$ in $[n]$, in which
at least $k$ of the random variables $X_i$ take value 1.
\item Let $p=\frac{1+\epsilon}{d}$. Then \whp\ $\sum_{i=1}^{\epsilon^3n}X_i\le \frac{2\epsilon^3n}{d}$.
\item Let $p=\frac{1+\epsilon}{d}$. Then \whp\ $\sum_{i=1}^{\epsilon n}X_i\le \frac{2\epsilon n}{d}$.
\item Let $p=\frac{1+\epsilon}{d}$. Then \whp\ for every $\epsilon^3n\le t\le \epsilon n$,
 $\sum_{i=1}^{t} X_i\ge \frac{\left(1+\frac{3\epsilon}{4}\right)t}{d}$.
\end{enumerate}
\end{lemma}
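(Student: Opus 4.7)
The four parts are all Chernoff-style tail bounds for binomial sums, so the plan is to identify the correct mean and deviation in each case and then patch things together. Throughout I will use the standard Chernoff estimates: for $X\sim\mathrm{Bin}(N,q)$ with $\mu=Nq$, $\Pr[X\ge(1+\delta)\mu]\le \exp(-\delta^2\mu/3)$ for $0<\delta\le 1$, and $\Pr[X\le(1-\delta)\mu]\le \exp(-\delta^2\mu/2)$.

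For \textbf{part 1}, fix an interval $I\subseteq[n]$ of length $kd$. Then $\sum_{i\in I}X_i\sim\mathrm{Bin}(kd,(1-\epsilon)/d)$ has mean $(1-\epsilon)k$, and the event of interest is $\sum_{i\in I}X_i\ge k=(1+\frac{\epsilon}{1-\epsilon})(1-\epsilon)k$. Chernoff gives probability at most $\exp\!\big(-\tfrac{\epsilon^2}{3(1-\epsilon)}k\big)\le n^{-4/3}$ once $\epsilon$ is small enough. Union bound over the at most $n$ relevant intervals finishes the part. For \textbf{parts 2 and 3}, the sum $\sum_{i=1}^t X_i$ (with $t=\epsilon^3 n$ or $t=\epsilon n$) has mean $(1+\epsilon)t/d$, and we want to bound the probability that it exceeds $2t/d$, i.e., roughly twice its expectation. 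Chernoff gives $\exp(-\Omega(t/d))$; since $d=o(n)$ and $t\ge\epsilon^3 n$, the exponent tends to $-\infty$, giving the \whp statement.

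\textbf{Part 4} is the only one that requires uniformity over $t$, and this is the main (mild) obstacle. The key observation is that $S(t):=\sum_{i=1}^t X_i$ is monotone nondecreasing in $t$, so it suffices to control $S$ at a geometric sequence of checkpoints and use monotonicity in between. Concretely, fix a small auxiliary constant $\gamma=\gamma(\epsilon)$ (for instance $\gamma=\epsilon/100$), set $t_j:=\lceil\epsilon^3 n\,(1+\gamma)^j\rceil$ for $j=0,1,\ldots,m$ with $m=O(\log(1/\epsilon)/\gamma)=O_\epsilon(1)$ chosen so that $t_m\ge\epsilon n$. At each checkpoint the target lower bound
\[
S(t_j)\ge (1+\tfrac{3\epsilon}{4}+2\gamma)\,t_j/d
\]
is a lower-tail deviation of relative size $\Omega(\epsilon)$ from the mean $(1+\epsilon)t_j/d$, so Chernoff gives failure probability $\exp(-\Omega(\epsilon^2 t_j/d))\le\exp(-\Omega(\epsilon^5 n/d))=o(1)$ since $d=o(n)$. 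Taking the union bound over the $O_\epsilon(1)$ checkpoints, all the events $S(t_j)\ge(1+\tfrac{3\epsilon}{4}+2\gamma)t_j/d$ hold simultaneously \whp. For any $t\in[t_{j-1},t_j]$ we then have, by monotonicity,
\[
S(t)\ge S(t_{j-1})\ge (1+\tfrac{3\epsilon}{4}+2\gamma)\,\tfrac{t_{j-1}}{d}\ge \tfrac{1+\tfrac{3\epsilon}{4}+2\gamma}{1+\gamma}\cdot\tfrac{t}{d}\ge (1+\tfrac{3\epsilon}{4})\,\tfrac{t}{d},
\]
with the last inequality holding once $\gamma$ is small relative to $\epsilon$. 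This yields the desired uniform estimate.

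The only real choice in the argument is the tightness of the slack at the checkpoints versus the geometric spacing $\gamma$; as long as $\gamma$ is chosen small enough relative to $\epsilon$ (so that the lost factor $(1+\gamma)^{-1}$ in the monotonicity step is absorbed into the gap between $1+3\epsilon/4$ and $1+\epsilon$), everything goes through. No additional ideas beyond standard concentration are needed.
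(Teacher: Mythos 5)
Your proposal is correct. Parts 1--3 follow the paper's proof essentially verbatim: a single Chernoff bound for the upper tail (plus a union bound over the $\le n$ intervals in part 1), and your bookkeeping of the deviation parameters checks out. The only substantive difference is in part 4, where uniformity over $t$ must be handled. The paper partitions $[\epsilon n]$ into $1/\epsilon^3$ blocks of equal length $\epsilon^4 n$, lower-bounds each block sum by Chernoff, and then for a given $t$ adds up the complete blocks contained in $[1,t]$, discarding the partial one; you instead place $O_\epsilon(1)$ geometrically spaced checkpoints $t_j=\lceil\epsilon^3 n(1+\gamma)^j\rceil$, control $S(t_j)$ at each, and interpolate using the monotonicity of $S(t)$. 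Both are standard discretizations and both exploit the same slack between $1+\frac{3\epsilon}{4}$ and $1+\epsilon$; your version has the advantage that the relative loss in passing from a checkpoint to a nearby $t$ is uniformly $\frac{1}{1+\gamma}$, which is cleanly absorbed by taking $\gamma$ small compared to $\epsilon$. (In the arithmetic-block version the relative loss from the discarded partial block can be as large as $\epsilon^4 n/t\approx\epsilon$ when $t$ is near the lower end $\epsilon^3 n$, so the block length there needs to be chosen a bit more carefully; your geometric scheme sidesteps this issue entirely.) No gaps.
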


\begin{proof}
\noindent{(1)} For a given interval $I$ of length $kd$ in $[n]$, the
sum $\sum_{i\in I} X_i$ is distributed binomially with parameters
$kd$ and $p$. Applying the standard Chernoff-type bound (see, e.g.,
Theorem A.1.11 of \cite{AS}) to the upper tail of $\textnormal{Bin}(kd,p)$, and
then the union bound, we see that the probability of the existence
of an interval violating the assertion of the lemma is at most
$$
(n-kd+1)Pr[B(kd,p)\ge k]< n\cdot
e^{-\frac{\epsilon^2}{3}(1-\epsilon)k} <n\cdot
e^{-\frac{\epsilon^2(1-\epsilon)}{3}\,\frac{4}{\epsilon^2}\ln n}=
o(1)\,,
$$
for small enough $\epsilon>0$.

\noindent{(2)} This follows by applying Chernoff to the upper tail of $\sum_{i=1}^{\epsilon^3n}X_i\sim \textnormal{Bin}\left(\epsilon^3n,\frac{1+\epsilon}{d}\right)$.

\noindent{(3)} This follows by applying Chernoff to the upper tail of $\sum_{i=1}^{\epsilon n}X_i\sim \textnormal{Bin}\left(\epsilon n,\frac{1+\epsilon}{d}\right)$.

\noindent{(4)} Partition $[\epsilon n]$ into $1/\epsilon^3$ intervals $I_j$ of length $\epsilon^4n$ each. Applying Chernoff to the lower tails of the interval sums $\sum_{i\in I_j}X_i$ and then the union bound, we derive that \whp for all $j$
$$
\sum_{i\in I_j}X_i\ge \frac{\epsilon^4(1+\epsilon)n}{d}-\frac{\epsilon^6n}{d}\,,
$$
say. Assume this to be true. Then for $\epsilon^3n\le t\le \epsilon n$,
$$
\sum_{i=1}^{t}X_i \ge \left\lfloor \frac{t}{\epsilon^4n}\right\rfloor\,\left(\frac{\epsilon^4(1+\epsilon)n}{d}-\frac{\epsilon^6n}{d}\right) \ge \frac{\left(1+\frac{3\epsilon}{4}\right)t}{d}\,.
$$
\end{proof}

\section{Proofs}
\subsection{Proof of Theorem \ref{thm1}}
Assume to the contrary that $R$ contains a connected
component $C$ with at least  $k=\frac{4}{\epsilon^2}\ln n$ vertices.
Let us look at the epoch of the DFS when $C$ was created. Consider
the moment inside this epoch when the algorithm  found the
$k$-th vertex of $C$ and has just moved it to $U$. Denote
$C_0=(S\cup U)\cap C$ at that moment. Then $|C_0|=k$, the subgraph $G[C_0]$ is connected and thus spans at least $k-1$ edges. Notice that
$$
|N_G(C_0)|\le e_G(C_0,V-C_0)=\sum_{v\in C_0}d_G(v)-2e_G(C_0)\le kd-2(k-1)\,.
$$
The algorithm got exactly $k$ positive  answers to its queries
to random variables $X_i$ during the epoch, with each positive
answer being responsible for revealing yet another vertex of $C_0$.
At this moment during the epoch only the vertices in $C_0$ and those neighboring
them in $G$ have been queried, and the number of these vertices is
therefore at most $k+kd-2(k-1)\le kd$. It thus follows that the
sequence $\bar{X}$ contains an interval of length at most $kd$ with
at least $k$ 1's inside --- a contradiction to Property 1 of Lemma
\ref{lem1}.

\subsection{Proofs of Theorems \ref{th2} and \ref{th3}}
The proofs of Theorems \ref{th2} and \ref{th3} are based on the same lemma we present and prove next. Observe that in an $(n,d,\lambda)$-graph $G=(V,E)$, every vertex subset $S$ expands itself outside by at most the factor of $d$. The assumption $\lambda/d\le \delta$ we put on the eigenvalue ratio is fairly mild (indeed, best pseudo-random graphs are known to satisfy $\lambda=\Theta(\sqrt{d})$, see, e.g., \cite{KS06}), and cannot guarantee such expansion for {\em all sets}. The key lemma below asserts that sets $S\subset V$ of relevant size $|S|=\Theta(n/d)$, not expanding themselves by nearly the factor of $d$, are very rare in $G$ even under this weak eigenvalue ratio assumption, and thus  are unlikely to fall into a random subset $R$ of size proportional to $n/d$.

\begin{lemma}\label{key_lemma}
For every $0<\alpha_0<1$, $0<c\le 1/3$ there exists $\delta>0$ such that the following is true. Let $G=(V,E)$ be an $(n,d,\lambda)$-graph.
Assume that $d=o(n)$ and $\frac{\lambda}{d}<\delta$. Let $p\le \frac{2}{d}$. Form a random subset $R\subseteq V$ by including each vertex $v\in V$ in $R$ independently and with probability $p$. Then \whp\ $R$ does not contain a set $S$ with $|S|=m$, $\frac{cn}{d}\le m\le \frac{n}{3d}$, such that $|N_G(S)|< (1-\alpha_0)\left(dm-\frac{d^2m^2}{2n}\right)$.
\end{lemma}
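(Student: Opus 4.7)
My plan is a first-moment argument, bounding the expected number of bad sets contained in $R$. For each $m\in[cn/d,n/(3d)]$, let $B_m$ denote the number of subsets $S\subseteq V$ with $|S|=m$ and $|N_G(S)|<K_m:=(1-\alpha_0)(dm-d^2m^2/(2n))$. By the union bound, the probability that $R$ contains any bad set is at most $\sum_m B_m p^m$, and I will show that this sum is $o(1)$ provided $\delta$ is chosen small enough as a function of $c$ and $\alpha_0$; the conclusion then follows by Markov's inequality.

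The key structural input for bounding $B_m$ is the following. Given a bad $S$, set $T:=V\setminus(S\cup N_G(S))$, so that $e_G(S,T)=0$ and $|T|\ge n-m-K_m\ge n/2$ (using $m\le n/(3d)$ and $K_m\le(1-\alpha_0)dm\le n/3$ in the stated range of $c,\alpha_0$). Lemma \ref{eml} applied to the pair $(S,T)$ gives $|S|\cdot|T|\le(\lambda n/d)^2\le\delta^2 n^2$, hence $|S|\le 2\delta^2 n$. Moreover, every vertex of $S$ has all its neighbors in $V\setminus T$, so $S$ is contained in the ``inner core'' $W^{\mathrm{in}}:=\{v\in V\setminus T:N_G(v)\cap T=\emptyset\}$, and a second application of Lemma \ref{eml} to $(W^{\mathrm{in}},T)$ yields $|W^{\mathrm{in}}|\le 2\delta^2 n$. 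I will enumerate bad sets by choosing a witness $T$ of a suitable size $t\le n-m-K_m$ and then picking $S$ inside the small set $V\setminus(T\cup N_G(T))$; after correcting for multiplicity (each bad $S$ admits $\ge\binom{n-m-K_m}{t}$ choices of $T$), this gives a bound of the form $B_m\le\binom{n}{t}\binom{2\delta^2 n}{m}\big/\binom{n-m-K_m}{t}$.

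To finish, I will split into two regimes of $d$. For $d$ below a threshold $d_0=d_0(\alpha_0,c)$ of order $1/\delta^2$, a \emph{deterministic} Cauchy--Schwarz argument, using $|N_G(S)|\ge e_G(S,V\setminus S)^2\big/\sum_v d_G(v,S)^2$ together with the spectral lower bound $e_G(S,V\setminus S)\ge dm-dm^2/n-\lambda m$ and the spectral upper bound $\sum_v d_G(v,S)^2=\chi_S^{\top}A^2\chi_S\le d^2m^2/n+\lambda^2 m$, already gives $|N_G(S)|\ge(1-\alpha_0)(dm-d^2m^2/(2n))$ for \emph{every} $S$, so $B_m=0$ in this range. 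For $d\ge d_0$ we have $m=cn/d=O(\delta^2 n)$, which is the regime in which the probabilistic enumeration bound on $B_m$ above is combined with $p^m\le(2/d)^m$.

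The main obstacle is the large-$d$ regime. The combinatorial factor $\binom{n}{t}$ is exponential in $n$, and it must be defeated jointly by the smallness of $\binom{2\delta^2 n}{m}$ (since $m\ll 2\delta^2 n$ when $d$ is large), by $p^m=(2/d)^m$, and by the overcounting denominator $\binom{n-m-K_m}{t}$. Balancing these forces a careful choice of $t$ (close to $n-m-K_m$) and of $\delta$ as a function of $c$ and $\alpha_0$ alone (but independent of $d$); this Alon--R\"odl-inspired step is where essentially all the work lies, and where the hypothesis $\lambda/d<\delta$ is used most delicately.
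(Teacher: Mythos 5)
Your overall frame (first moment plus a union bound over $m$, with the expander mixing lemma forcing non-expanding sets into a small ``core'') is in the right spirit, and your deterministic Cauchy--Schwarz observation for small $d$ is fine (indeed nearly vacuous, since Alon--Boppana forces $d\gtrsim 1/\delta^2$ once $\lambda/d<\delta$). But the enumeration in the main, large-$d$ regime has a genuine quantitative gap that no choice of $t$ can repair. The overcounting ratio satisfies
$$
\frac{\binom{n}{t}}{\binom{n-m-K_m}{t}}=\prod_{j=0}^{m+K_m-1}\frac{n-j}{n-t-j}\ \ge\ \left(1+\frac{t}{n-t}\right)^{m+K_m}\ \ge\ 2^{\,m+K_m}\ \ge\ 2^{\,\Omega(dm)}
$$
for $t\ge n/2$, which is the size of $t$ you need for the core bound $|W^{\mathrm{in}}|\le \delta^2 n^2/t\le 2\delta^2 n$ to hold. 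Against this you only have $\binom{2\delta^2 n}{m}p^m\le (4e\delta^2/c)^m=e^{-O(m\log(1/\delta))}$, and since $\delta$ must be chosen as a function of $\alpha_0$ and $c$ alone (independently of $d$), the product $2^{\Omega(dm)}\cdot e^{-O(m\log(1/\delta))}$ is not $o(1)$ once $d\gg \log(1/\delta)$. Taking $t=\beta n$ smaller does not help: the ratio is still $e^{\Omega(\beta dm)}$ while the core weakens to $\delta^2 n/\beta$, and optimizing over $\beta$ forces $\delta\lesssim\sqrt{c/d}$, again a forbidden dependence on $d$. The root cause is that your witness $T$ implicitly encodes the set $S\cup N_G(S)$, which has size $\Theta(dm)$, so its information cost is exponential in $dm$, whereas all the savings you can extract are only exponential in $m$.

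The paper's proof avoids this by never paying for $N_G(S)$: it builds $S$ as an \emph{ordered} sequence $v_1,\dots,v_m$, paying at most $n$ per vertex (so $n^m/m!\approx\binom{n}{m}$ in total, and $\binom{n}{m}p^m\le(2e/c)^m$ is only a constant to the power $m$), and uses Corollary \ref{cor1} to show that a non-expanding set forces at least $\alpha m$ of these $m$ choices to land in a prescribed set of size $O(\delta^2 n/\alpha^2)$, yielding a savings factor of $O(\delta^2/\alpha^2)^{\alpha m}$ that beats $(2e/c)^m$ once $\delta=\delta(\alpha_0,c)$ is small. If you want to salvage your scheme, you would need a witness for $S$ whose specification cost is $e^{O(m)}$ rather than $e^{\Theta(dm)}$; the vertex-by-vertex construction is precisely such a witness.
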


\begin{proof}
A set $S\subseteq V$, $|S|=m$, is called {\em non-expanding} if $|N_G(S)|< (1-\alpha_0)\left(dm-\frac{d^2m^2}{2n}\right)$, and is {\em expanding} otherwise. We estimate from above the number of non-expanding $m$-sets in $G$. Define $0<\alpha<1$ by $1-\alpha_0=\frac{(1-\alpha)^2}{1+\alpha}$. (The function $f(x)=\frac{(1-x)^2}{1+x}$ is monotone decreasing in the interval $[0,1]$ with $f(0)=1$, $f(1)=0$, so $\alpha=\alpha(\alpha_0)$ as above is indeed well defined.) Consider the number of ways to choose a sequence $\tau=(v_1,\ldots,v_m)$ of distinct
vertices of $G$ such that the union $S$ of the vertices in the
sequence forms a non-expanding set. Suppose we have chosen the first
$i-1$ vertices of $\tau$, let $S_{i-1}=\{v_1,\ldots,v_{i-1}\}$, and  $N_{i-1}=N_G(S_{i-1})$. A vertex $v$ is {\em bad} with respect to the
prefix $(v_1,\ldots,v_{i-1})$ if $v$ has at
most $(1-\alpha)\frac{d}{n}(n-(d+1)(i-1))$ neighbors in $V-(S_{i-1}\cup N_{i-1})$, and is {\em good} otherwise.
Each good vertex $v_i$ appended to $S_{i-1}$ increases substantially the external neighborhood of the prefix.
Suppose that $\tau$ has at most $\alpha m$ bad vertices. Then
\begin{eqnarray*}
|N_G(S)|&=&|N_G(S_m)|\ge \sum_{i=\alpha m+1}^{m}(1-\alpha)\frac{d}{n}(n-(d+1)(i-1))-m\\
&\ge& (1-\alpha)^2dm - \frac{(1-\alpha)d(d+1)}{n}\sum_{i=\alpha m+1}^m(i-1)-m\\
&\ge& (1-\alpha)^2dm - \frac{(1-\alpha)^2(1+\alpha)d(d+1)m^2}{2n}-m\\
&\ge& (1-\alpha)^2dm\left(1-\frac{(1+\alpha)dm}{2n}\right)-\frac{(1+\alpha)dm^2}{2n}-m\,.
\end{eqnarray*}
(We subtracted $m$ in the first line above to account for the the $m$ vertices of $S$ itself, not contributing to the external neighborhood of $S$.) We need to verify that $S$ is an expanding set. Observe that $\frac{(1+\alpha)d
m^2}{2n}<\frac{2dm^2}{2n}\le \frac{m}{3}<m$, due to our assumption on $m$. Also,
\begin{gather*}
(1-\alpha)^2dm\left(1-\frac{(1+\alpha)dm}{2n}\right)-(1-\alpha_0)dm\left(1-\frac{dm}{2n}\right)\\
=(1-\alpha)^2dm\left(1-\frac{(1+\alpha)dm}{2n}-\frac{1-\frac{dm}{2n}}{1+\alpha}\right)\\
=\frac{(1-\alpha)^2}{1+\alpha}dm\left(1+\alpha-(1+\alpha)^2\frac{dm}{2n}-1+\frac{dm}{2n}\right)\\
(1-\alpha_0)\alpha dm\left(1-(2+\alpha)\frac{dm}{2n}\right)\,.
\end{gather*}
Since $\frac{dm}{2n}\le \frac{1}{6}$ and $2+\alpha<3$, the expression above is at least $(1-\alpha_0)\alpha dm/2>2m$, for large enough $d$, the latter can be guaranteed by our choice of $\delta$. It follows that in this case the set $S$ is indeed expanding. Hence, in order to produce a non-expanding set $S$, the sequence $\tau$ should contain at least $\alpha m$ bad vertices. Let us zoom in at the $i$-th vertex $v_i$ of $\tau$. Given $v_1,\ldots,v_{i-1}$, the set $S_{i-1}\cup N_{i-1}$ has obviously at most $(i-1)(d+1)<m(d+1)<n/2$ vertices. Then by Corollary \ref{cor1} the number of bad choices for $v_i$ is at most $\frac{2}{\alpha^2}\left(\frac{\lambda}{d}\right)^2n\le \frac{2}{\alpha^2}\delta^2n$. Therefore the number of sequences $\tau$ with at least $\alpha m$ bad vertices is at most
$$
\binom{m}{\alpha m}\cdot \left(\frac{2}{\alpha^2}\delta^2n\right)^{\alpha m}n^{m-\alpha m}
\le \left[\left(\frac{e}{\alpha}\right)^{\alpha}\cdot\left(\frac{2}{\alpha^2}\delta^2\right)^{\alpha}\cdot n\right]^m\,.
$$
Dividing by $m!$ to get the number of unordered non-expanding
$m$-sets, and then multiplying by $p^m$ we get that the probability
that $R$ contains a non-expanding $m$-set is at most
\begin{equation}\label{lab1}
\frac{ \left[\left(\frac{e}{\alpha}\right)^{\alpha}\cdot\left(\frac{2}{\alpha^2}\delta^2\right)^{\alpha}\cdot np\right]^m}{m!}\le
\left[\left(\frac{2e\delta^2}{\alpha^3}\right)^{\alpha}\cdot\frac{enp}{m}\right]^m\,.
\end{equation}
Recall that we assumed $p\le\frac{2}{d}$ and $m\ge \frac{cn}{d}$, implying $\frac{np}{m}\le \frac{2}{c}$. Choosing $\delta>0$ from the lemma statement small enough guarantees that the expression in (\ref{lab1}) is, say, at most  $2^{-m}$. Applying the union bound over all possible values of $m$ establishes the lemma.
\end{proof}

\bigskip

\noindent{\bf Proof of Theorem \ref{th2}.} Set $\alpha_0=\frac{\epsilon}{25}$, $c=\epsilon$, and choose $\delta$ in the theorem statement to be $\delta(\alpha_0,c)$ from Lemma \ref{key_lemma}. Run the DFS algorithm of $G$ and feed it with a sequence $\bar{X}=(X_i)_{i=1}^n$ of i.i.d. Bernoulli($p$) random variables. Assume that $\bar{X}$ satisfies the properties stated in Lemma \ref{lem1}. We claim that after the first $\epsilon n$ vertex queries (of the type ``Whether $v\in R$?") of the DFS algorithm, the set $U$ contains at least $\frac{\epsilon^2 n}{5d}$ vertices, with the contents of $U$ forming
a path of desired length at that moment.  At that point, all $\epsilon n$ queried vertices reside in $S\cup U\cup W$, implying $|S\cup U\cup W|=\epsilon n$. Also, each positive answer to a query put a vertex in $U$ (that possibly has migrated further to $S$). Hence $|S\cup U|\ge\frac{\epsilon\left(1+\frac{3\epsilon}{4}\right)n}{d}$, by Property (4) of Lemma \ref{lem1}. Denote $|S|=m$. If $|U|\le\frac{\epsilon^2n}{5d}$, then it follows that
$$
m\ge\frac{\epsilon\left(1+\frac{3\epsilon}{4}\right)n}{d}-\frac{\epsilon^2n}{5d}=\frac{\left(\epsilon+\frac{11\epsilon^2}{20}\right)n}{d}\,.
$$
Also, by Property (3) of Lemma \ref{lem1}, $m\le \frac{2\epsilon n}{d}$. So Lemma \ref{key_lemma} is applicable, and we derive:
\begin{eqnarray*}
|N_G(S)|&\ge& (1-\alpha_0)\left(dm-\frac{d^2m^2}{2n}\right)\\
&\ge& (1-\alpha_0)\left(\epsilon+\frac{11\epsilon^2}{20}\right)\left(1-\frac{\epsilon}{2}-\frac{11\epsilon^2}{40}\right)n\\
&=&\left(1-\frac{\epsilon}{25}\right)\left(\epsilon +\frac{\epsilon^2}{20}-O(\epsilon^3)\right)n\\
&>&\epsilon n\,,
\end{eqnarray*}
for small enough $\epsilon>0$ --- a contradiction, since, as we stated earlier, at any point of the algorithm execution we have $N_G(S)\subseteq U\cup W$, and $|U\cup W|\le \epsilon n$.

\bigskip

\Remark Since by (\ref{eml1}) there is an edge between any two disjoint vertex subsets $B,C$ of an $(n,d,\lambda)$-graph $G$, as long as $|B|,|C|>\frac{\lambda n}{d}$, we obtain immediately that for $\lambda/d$ small enough as a function of $\epsilon$, the random set $R$ contains also a {\em cycle} of length proportional to $\frac{\epsilon^2n}{d}$. Indeed, take a path $P$ of length $\frac{\epsilon^2n}{5d}$, whose typical existence is guaranteed by Theorem \ref{th2}, and let $B,C$ be the first, resp. last, $\frac{\epsilon^2n}{15d}$ vertices of $P$. Then $G$ has an edge between $B$ and $C$, this edge obviously closes a cycle $C$ with the corresponding part of $P$; the length of this cycle is proportional to $\frac{\epsilon^2n}{d}$.

\medskip

\noindent{\bf Proof of Theorem \ref{th3}.} Set $\alpha_0=\frac{\epsilon}{5}$, $c=\epsilon^3$, and choose $\delta$ in the theorem statement to be $\delta(\alpha_0,c)$ from Lemma \ref{key_lemma}.
Run the DFS algorithm of $G$ and feed it with a random binary sequence $\bar{X}$. Assume that $\bar{X}$ satisfies the properties stated in Lemma \ref{lem1}. Let us focus on the situation after the first $\epsilon n$ vertex queries of the algorithm. We claim that at this moment we are in the midst of processing a connected component of $G[R]$ of size at least $\frac{\epsilon n}{d}$. Assume that at some moment $t\in[\epsilon^3n,\epsilon n]$ the set $U$ becomes empty. We have then: $|S\cup W|=t$ and $m:=|S|=\sum_{i=1}^t X_i\ge\frac{\left(1+\frac{3\epsilon}{4}\right)t}{d}$, by Property (4) of Lemma \ref{lem1}; also, $m\le \frac{2\epsilon n}{d}$,  by Property (3) of Lemma \ref{lem1}, allowing to apply Lemma \ref{key_lemma}. Since $U=\emptyset$, we have now: $N_G(S)\subseteq W$. Therefore
\begin{eqnarray*}
|W|&\ge& (1-\alpha_0)\left(dm-\frac{d^2m^2}{2n}\right)\\
&\ge& (1-\alpha_0)\left(1+\frac{3\epsilon}{4}\right)\left(1-\frac{\left(1+\frac{3\epsilon}{4}\right)t}{2n}\right)t\\
&\ge& \left(1-\frac{\epsilon}{5}\right)\left(1+\frac{3\epsilon}{4}\right)\left(1-\left(1+\frac{3\epsilon}{4}\right)\frac{\epsilon}{2}\right)t\\
&>&t\,,
\end{eqnarray*}
for small enough $\epsilon>0$ --- a contradiction. Hence $U$ never empties in the interval $[\epsilon^3n,\epsilon n]$. This means that all vertices added to $U$ during this period belong to the same connected component $C$, whose epoch contains this interval; their number is
$$
\sum_{i=\epsilon^3n}^{\epsilon n}X_i\ge \frac{\epsilon\left(1+\frac{3\epsilon}{4}\right)n}{d}-\frac{2\epsilon^3n}{d} \ge \frac{\epsilon n}{d}\
$$
(we used Property (2) of Lemma \ref{lem1} in the estimate above.)
It follows that under the above probabilistic assumptions  $G[R]$ has a component $C$ of at least $\frac{\epsilon n}{d}$ vertices, as claimed.

\section{Concluding remarks}
We have proven that in the site percolation model for an $(n,d,\lambda)$-graph $G$, under rather mild assumptions on the spectral ratio $\lambda/d$, the phase transition occurs at $p=\frac{1}{d}$: for $p=\frac{1-\epsilon}{d}$, \whp all connected components of the subgraph of $G$ induced by a random subset $R$ are of size at most logarithmic in $n$, while for $p=\frac{1+\epsilon}{d}$, the random set $R$ spans \whp a connected component of size at least $\frac{\epsilon n}{d}$ and a path of length proportional to $\frac{\epsilon^2 n}{d}$.

Although we have established the existence of the phase transition for this model of pseudo-random graphs in this paper, many further natural questions about site percolation on pseudo-random graphs have not been resolved here, and it would be nice to address them. Particular issues include, for the super-critical regime $p=\frac{1+\epsilon}{d}$:
\begin{itemize}
\item the uniqueness of the giant component, bounding sizes of all other components spanned by the random subset $R$;
\item accurate (in $\epsilon$) asymptotics of the size of the giant component in $G[R]$;
\item  upper bounding the length of a longest path/cycle spanned by $R$.
\end{itemize}
And of course, it would be very interesting to look into the critical regime $p=\frac{1+o(1)}{d}$, aiming to try and understand the continuous evolution of the size of the giant component spanned by $R$ from logarithmic to linear in $|R|$.

\bigskip

\noindent{\bf Acknowledgement.} The author would like to thank Amin Coja-Oghlan, Uri Feige, Daniel Reichman and Wojciech Samotij for stimulating discussions about this problem.

\end{document}